\documentclass[12pt]{amsart}

\usepackage{amscd}
\usepackage{amsmath}
\usepackage{amsfonts}
\usepackage{amssymb}
\usepackage{graphicx}
\usepackage[T2A]{fontenc}
\usepackage[russian,english]{babel}
\usepackage[all]{xy}

%-------------------------------------------
\theoremstyle{plain}
\newtheorem{theorem}{Theorem}[section]
\newtheorem{lemma}{Lemma}[section]

%\numberwithin{equation}{section}
%\newenvironment{proof}[1][Proof]{\textbf{#1.} }{\ \rule{0.5em}{0.5em}}

\theoremstyle{definition}

\theoremstyle{definition}
\newtoks{\thehExample}
\newtheorem*{Example}{\the\thehExample}

\begin{document}
\selectlanguage{english}
\title{ Embedding  Semigroup $C^*$-algebras \\
into Inductive Limits}

\author{E.~V.~Lipacheva}
\email{elipacheva@gmail.com}
\address{Chair of Higher Mathematics,
Kazan State Power Engineering University, Krasnoselskaya 51,  Kazan,
Russian Federation, 420066}
%\noaffiliation % If the author does not specify a place of work.

\maketitle

\begin{abstract} % You shouldn't use formulas and citations in the abstract.
The note
is concerned with  inductive systems of  Toeplitz algebras and their $*$-homomorphisms over arbitrary
partially ordered sets. The Toeplitz algebra is the  reduced semigroup $C^*$-algebra
for  the additive semigroup of non-negative integers. It is known that every partially ordered set can be
represented as the union of  the  family of its maximal upward directed subsets indexed by elements of some set. In our previous work we have
 studied a topology on this set of indexes. For every maximal upward directed subset we consider the inductive system of Toeplitz algebras that is defined by a given inductive system over an arbitrary partially ordered set and its inductive limit. Then for a  base neighbourhood $U_a$
 of the  topology on the set of indexes we construct the $C^*$-algebra $\mathfrak{B}_a$ which is the direct product of those inductive limits.
In this note we continue studying  the connection between the properties of the topology on the set of indexes and properties of inductive limits for systems consisting of $C^*$-algebras $\mathfrak{B}_a$ and their $*$-homorphisms. It is proved that there exists an embedding of the  reduced semigroup $C^*$-algebra
for  a semigroup in the additive group of all rational numbers into the inductive limit for the system of $C^*$-algebras $\mathfrak{B}_a$.
\end{abstract}

\

\emph{keywords:}{embedding, inductive limit, inductive system,
injective $*$-homomorphism, partially ordered set,  reduced
semigroup $C^*$-algebra, semigroup, Toeplitz algebra, upward
directed set}

\section{Introduction}
In algebraic quantum field theory, which serves as our main motivation, one usually
considers inductive systems of $C^*$-algebras and their $*$-homomorphisms. The simplest example
of such a system is a net of $C^*$-algebras and their embeddings.

In  \cite{Ruzzi2005, RuzziVasselli2012, Vasselli2015} the authors study nets containing
$C^*$-algebras of quantum observables for the case of curved
spacetimes.  The net  constructed by means of the semigroup
$C^*$-algebra generated by the path semigroup for a partially
ordered set is treated in \cite{GLS2016}.  In \cite{GLS2018} the
authors deal with a net consisting of $C^*$-algebras associated to a
net  of Hilbert spaces over a partially ordered set.

A part of motivation for studying inductive systems of Toeplitz
algebras comes from \cite{gumerov2018, GLG2018, GLG2018-2,
gumLJM2019}. The paper \cite{gumerov2018} contains results on limit
automorphisms  for inductive sequences of Toeplitz algebras which
are closely related to the facts on the mappings of topological
groups \cite{Gu05, gumerovLJM2005, Gu05s}. In \cite{GLG2018,
GLG2018-2} the authors introduce a topology associated to a
partially ordered set and study
 its relation to  properties of inductive limits arising from a system of $C^*$-algebras over the partially ordered set which yields that topology. The existence of an isomorphism between
 the inductive limit of an inductive system of Toeplitz algebras over a
directed set  and the reduced
semigroup  $C^*$-algebra for a semigroup in the group of rational
numbers is shown in \cite{GLG2018, gumLJM2019}.

 This note deals with inductive systems of Toeplitz algebras and their  $*$-homomorphisms over arbitrary partially ordered sets. Here, by the Toeplitz algebra we mean the reduced semigroup
$C^*$-algebra for the additive semigroup of non-negative integers.
The study
of such semigroup  $C^*$-algebras goes back to L.~A.~Coburn
\cite{Co67, coburn69}, R.~G.~Douglas \cite{douglas72},
G.~J.~Murphy \cite{murphy87, murphy91}. There is a large literature on the subject
 (see, for example,  \cite{murphy89, Liarxiv, LipSibMJ, LO15, AGL4} and
the references there in).

For a given
inductive system of Toeplitz algebras over a partially ordered set $K$ one can take an inductive subsystem and its inductive limit $\mathfrak{A}^{K_i}$ over every maximal upward directed subset $K_i$ in $K$, where $i$ runs over a set of indexes $I$. Using the topology on $I$ and the inductive limits $\mathfrak{A}^{K_i}$,  we construct new $C^*$-algebras $\mathfrak{B}_a$ that are the direct products of $C^*$-algebras $\mathfrak{A}^{K_i}$. Then we consider an inductive system consisting of  $C^*$-algebras $\mathfrak{B}_a$ over the upward directed set $K_i$ and the inductive limit $\mathfrak{B}^{K_i}$ of that system. We prove that the reduced semigroup $C^*$-algebra for  a semigroup in the additive group of all rational numbers can be embedded into $C^*$-algebra $\mathfrak{B}^{K_i}$. In other words, it is shown there exists an injective $*$-homomorphism between these $C^*$-algebras.

The present note  consists of  four sections. The first two sections are Introduction and
Preliminaries.  and three more sections containing the results.  Section~\ref{top}
 deals with the  topology on the index set $I$.  Section~\ref{res} contains an auxiliary statement and the main results about embedding the reduced semigroup $C^*$-algebra.

\section{Preliminaries}\label{preliminaries}
\label{sec:1}

Let $K$ be an upward directed set.
We shall consider the category associated to the set $K$,
which is denoted by the same letter \ $K$. We recall that the
objects of this category are the elements of the set \ $K$, and, for
any pair \ $a,b \in K$, the set of morphisms from \ $a$ \ to \ $b$ \
consists of the single element \ $(a,b)$ \ provided that \ $a\leq
b$, and is the void set otherwise.

Further, we consider a covariant functor \ $\mathcal{F}$ \ from the
category \ $K$ \ into the category of unital \ $C^*$-algebras and
their unital \ $*$-homomorphisms.
Such a functor is called \emph{an inductive system } in the category
of \ $C^*$-algebras over the set \ $(K, \, \leq~)$. It may be given
by a collection \ $(K,\{\mathfrak{A}_a\},\{\sigma_{ba}\})$ \
satisfying the properties from the definition of a functor.  We
shall write \ $\mathcal{F}=(K,\{\mathfrak{A}_a\},\{\sigma_{ba}\})$.
 Here, $\{\mathfrak{A}_a\mid a\in K\}$ is a family of unital
 $C^*$-algebras, and $\sigma_{ba}:\mathfrak{A}_a\longrightarrow \mathfrak{A}_b$,
 where $a\leq b$, are unital $*$-homomorphisms of
$C^*$-algebras. Recall that the equations
$\sigma_{ca}=\sigma_{cb}\circ\sigma_{ba}$ hold for all elements
$a,b,c \in K$ satisfying the condition $a\leq b\leq c$. Furthermore,
for each element $a\in K$ the morphism $\sigma_{aa}$ is the identity
mapping.

%By Lemma~\ref{Kdecomposition}, we have equality (\ref{KbigcupKi})
%for an arbitrary partially ordered set $K$. Now, for each  \ $i\in
%I$, we consider the inductive system \
%$\mathcal{F}_i=(K_i,\{\mathfrak{A}_a\},\{\sigma_{ba}\})$ \ over the
%upward directed set \ $K_i$.

Throughout the paper, for a unital algebra $\mathfrak{A}$ its unit
will be denoted by $\mathbb{I}_{\mathfrak{A}}$.

 We recall the definition and  some facts concerning the inductive limits for inductive systems of $C^*$-algebras
(see, for example, \cite[Section 11.4]{KadisonRingrose},
\cite[Appendix L]{IL}).

\emph{The inductive limit} of the system
$\mathcal{F}=(K,\{\mathfrak{A}_a\},\{\sigma_{ba}\})$ is a pair
$(\mathfrak{A}^{K}, \{\sigma^{K}_a\})$ where
 \ $\mathfrak{A}^{K}$ is a $C^*$-algebra and $\{\sigma^{K}_a:\mathfrak{A}_a\rightarrow \mathfrak{A}^{K} \mid a\in
K\}$ is a family of canonical $*$-homomorphisms such that the
following diagram commutes whenever $a\leq b \ $:
\[
\xymatrix{
  \mathfrak{A}_a \ar[rr]^{\sigma_{ba}} \ar[dr]_{\sigma^{K}_a}
                &  &    \mathfrak{A}_b \ar[dl]^{\sigma^{K}_b}    \\
                & \mathfrak{A}^{K}                 }
\]
that is, the equality for mappings
\begin{equation}\label{sigma}
\sigma^{K}_a=\sigma^{K}_b\circ\sigma_{ba}
\end{equation}
holds. We note that one has the equality
\begin{equation}\label{cup}
\mathfrak{A}^{K}=\overline{\mathop\bigcup\limits_{a\in
K}\sigma^{K}_a(\mathfrak{A}_a)} \, ,
\end{equation}
where the bar means the closure of the set with respect to
the norm topology in the \ $C^*$-algebra \ $\mathfrak{A}^{K}$.

The inductive limit $(\mathfrak{A}^{K}, \{\sigma^{K}_a\})$  is
denoted as follows:
$$
(\mathfrak{A}^{K}, \{\sigma^{K}_a\}):=\varinjlim\mathcal{F}.
$$
The $C^*$-algebra $\mathfrak{A}^{K}$ itself is often called the
inductive limit.

  The exact construction  of the inductive limit  (\ref{cup})  and the explicit form
of the canonical $*$-homomorphisms are less important than the following
\emph{universal behavior}.

\begin{lemma}{\rm \cite[Appendix L, theorem L.1.1]{IL}}\label{UniversalProp}
Let $\mathfrak{B}$ be another $C^*$-algebra and
$\psi_a:\mathfrak{A}_a\longrightarrow\mathfrak{B}$ be a canonical
$*$-homomorphism for each $a\in K$, and the condition analogous to
(\ref{sigma}) is satisfied, that is,
$\psi_a=\psi_b\circ\sigma_{ba}$ for every $a\leq b \ $. Then the
following commutative diagram can be filled out with precisely one
$*$-homomorphism $\theta$ from $\mathfrak{A}^{K}$ onto
$\mathfrak{B}$ that leaves the diagram commutative:
\begin{equation}\label{theta}
\xymatrix{ \mathfrak{A}_a \ar[rr]^{\sigma_{ba}}
\ar[dr]^{\sigma^{K}_a} \ar[ddr]_{\psi_a} & & \mathfrak{A}_b \ar[dl]_{\sigma^{K}_b} \ar[ddl]^{\psi_b} \\
&  \mathfrak{A}^{K} \ar[d]^{\theta} & \\
&  \mathfrak{B}     }
\end{equation}
\end{lemma}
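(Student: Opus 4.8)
The plan is to construct $\theta$ directly on the norm-dense $*$-subalgebra $\bigcup_{a\in K}\sigma^{K}_a(\mathfrak{A}_a)$ of $\mathfrak{A}^{K}$ (dense by (\ref{cup})) and then extend it by continuity. On that subalgebra there is only one reasonable candidate: for $x\in\mathfrak{A}_a$ put $\theta(\sigma^{K}_a(x)):=\psi_a(x)$. This already gives uniqueness: any $\theta$ making (\ref{theta}) commute must agree with $\psi_a$ on $\sigma^{K}_a(\mathfrak{A}_a)$ for every $a$, hence is determined on $\bigcup_{a\in K}\sigma^{K}_a(\mathfrak{A}_a)$; since $*$-homomorphisms of $C^*$-algebras are norm-contractive and this union is dense, $\theta$ is determined on all of $\mathfrak{A}^{K}$.

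For existence, the first thing to check is that $\sigma^{K}_a(x)\mapsto\psi_a(x)$ is well defined. Given $x\in\mathfrak{A}_a$ and $y\in\mathfrak{A}_b$ with $\sigma^{K}_a(x)=\sigma^{K}_b(y)$, I would choose $c\geq a,b$ (possible since $K$ is directed); by (\ref{sigma}) this reduces the problem to showing $\sigma^{K}_c(z)=0\Rightarrow\psi_c(z)=0$, where $z=\sigma_{ca}(x)-\sigma_{cb}(y)$, using also $\psi_a=\psi_c\circ\sigma_{ca}$ and $\psi_b=\psi_c\circ\sigma_{cb}$. Here I would invoke the explicit construction of the $C^*$-inductive limit (see \cite[Section 11.4]{KadisonRingrose}, \cite[Appendix L]{IL}): $\|\sigma^{K}_c(z)\|=\lim_{d\geq c}\|\sigma_{dc}(z)\|$, the limit of the non-increasing net $\{\|\sigma_{dc}(z)\|\}_{d\geq c}$. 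Since $\psi_c(z)=\psi_d(\sigma_{dc}(z))$ and $\|\psi_d\|\leq 1$, we get $\|\psi_c(z)\|\leq\|\sigma_{dc}(z)\|$ for all $d\geq c$, hence $\|\psi_c(z)\|\leq\|\sigma^{K}_c(z)\|$. This yields well-definedness and, run for a general element, the key estimate $\|\psi_a(x)\|\leq\|\sigma^{K}_a(x)\|$ on the whole dense subalgebra.

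It remains to note that the resulting map $\theta_0$ on $\bigcup_{a}\sigma^{K}_a(\mathfrak{A}_a)$ is a $*$-homomorphism of $*$-algebras --- linearity, multiplicativity and $*$-compatibility follow from the corresponding properties of the $\psi_a$ after using directedness of $K$ to bring two elements to a common index --- and that it is contractive by the estimate above. Hence $\theta_0$ extends uniquely to a $*$-homomorphism $\theta\colon\mathfrak{A}^{K}\to\mathfrak{B}$ on the closure $\overline{\bigcup_{a}\sigma^{K}_a(\mathfrak{A}_a)}=\mathfrak{A}^{K}$; the identity $\theta\circ\sigma^{K}_a=\psi_a$ holds by construction, so diagram (\ref{theta}) commutes. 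For surjectivity: $\theta(\mathfrak{A}^{K})$ is a $C^*$-subalgebra of $\mathfrak{B}$ (the image of a $C^*$-algebra under a $*$-homomorphism is closed) containing every $\psi_a(\mathfrak{A}_a)$, and, the $\psi_a$ being canonical, $\mathfrak{B}=\overline{\bigcup_a\psi_a(\mathfrak{A}_a)}$; therefore $\theta(\mathfrak{A}^{K})=\mathfrak{B}$.

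I expect the one genuinely non-formal step to be the norm estimate $\|\psi_a(x)\|\leq\|\sigma^{K}_a(x)\|$: this is where the purely diagrammatic hypotheses are insufficient and one must fall back on the concrete description of the inductive limit and its limit-norm formula. Everything else --- uniqueness, the algebraic verifications, and the extension to the closure --- is a routine density-and-continuity argument.
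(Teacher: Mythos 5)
Your proposal is correct. Note that the paper offers no proof of this lemma at all --- it is quoted verbatim from Wegge-Olsen \cite[Appendix L]{IL} --- and your argument is precisely the standard one behind that reference: define $\theta$ on the dense $*$-subalgebra $\bigcup_a\sigma^K_a(\mathfrak{A}_a)$, prove well-definedness and contractivity via the limit-norm formula $\|\sigma^K_c(z)\|=\lim_{d\geq c}\|\sigma_{dc}(z)\|$, and extend by continuity; you also correctly identify that the ``onto'' clause only holds under the reading of ``canonical'' as $\mathfrak{B}=\overline{\bigcup_a\psi_a(\mathfrak{A}_a)}$, which is indeed how the cited theorem is stated.
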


The next lemma is usefull when we try to find out whether $\theta$
is injective.

\begin{lemma}{\rm \cite[Appendix L, lemma L.1.3]{IL}}\label{injective}
Let the commutative diagram (\ref{theta}) be hold for all $a\leq b \
$. If each $\psi_a:\mathfrak{A}_a\longrightarrow\mathfrak{B}$ is
injective then $\theta$ is also injective.
\end{lemma}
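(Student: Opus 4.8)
The plan is to prove that $\theta$ is isometric, which for a $*$-homomorphism between $C^*$-algebras is equivalent to being injective.

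First I would exploit the commutativity of diagram (\ref{theta}), namely the identity $\psi_a = \theta \circ \sigma^{K}_a$. Since $\psi_a$ is injective and is obtained by first applying $\sigma^{K}_a$ and then $\theta$, the canonical $*$-homomorphism $\sigma^{K}_a : \mathfrak{A}_a \to \mathfrak{A}^{K}$ must itself be injective. Hence, for every $a \in K$, both $\psi_a$ and $\sigma^{K}_a$ are injective $*$-homomorphisms of $C^*$-algebras, and therefore isometric.

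Next I would verify that $\theta$ preserves norms on each image $\sigma^{K}_a(\mathfrak{A}_a)$. An arbitrary element of this set has the form $\sigma^{K}_a(y)$ with $y \in \mathfrak{A}_a$, and using the commutativity of (\ref{theta}) together with the two isometries just noted,
\[
\|\theta(\sigma^{K}_a(y))\| = \|\psi_a(y)\| = \|y\| = \|\sigma^{K}_a(y)\| .
\]
Since every element of $\bigcup_{a \in K} \sigma^{K}_a(\mathfrak{A}_a)$ belongs to one of these images, $\theta$ is norm-preserving on the whole union.

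Finally I would pass to closures. By the equality (\ref{cup}) the set $\bigcup_{a \in K} \sigma^{K}_a(\mathfrak{A}_a)$ is dense in $\mathfrak{A}^{K}$. Given $x \in \mathfrak{A}^{K}$, pick a sequence $(x_n)$ from this union converging to $x$; as $\theta$ is norm-continuous, being a $*$-homomorphism, and the norm is continuous, we obtain $\|\theta(x)\| = \lim_n \|\theta(x_n)\| = \lim_n \|x_n\| = \|x\|$. Thus $\theta$ is isometric, hence injective. The only step that is not pure routine is the observation that the maps $\sigma^{K}_a$ are automatically injective here — the definition of the inductive limit does not build this in — after which the conclusion follows by a standard density-and-continuity argument.
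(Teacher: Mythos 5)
Your proof is correct and is essentially the standard argument (the paper does not prove this lemma itself but cites it from Wegge-Olsen): you deduce injectivity of each $\sigma^{K}_a$ from that of $\psi_a=\theta\circ\sigma^{K}_a$, invoke the fact that injective $*$-homomorphisms of $C^*$-algebras are isometric to show $\theta$ preserves norms on the dense set $\bigcup_{a\in K}\sigma^{K}_a(\mathfrak{A}_a)$ from (\ref{cup}), and extend by continuity. All steps check out, including the key observation that the $\sigma^{K}_a$ inherit injectivity.
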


Further, we recall the definition of the reduced semigroup \
$C^*$-algebras for semigroups in the group of
all rational numbers \ $\mathbb{Q}$.

Assume that \ $\Gamma$ \ is an arbitrary subgroup in  \ $\mathbb{Q}$. Let \ $\Gamma^+:=\Gamma\cap
[0,+\infty)$ \ be the positive cone in the ordered group \ $\Gamma.$
As usual, the symbol \ $l^2(\Gamma^+)$ \ stands for the Hilbert
space of all square summable complex-valued functions on the
additive subgroup $\Gamma^+$. The canonical orthonormal basis in the
Hilbert space \ $l^2(\Gamma^+)$ \ is denoted by \ $\{\, e_g \mid
g\in \Gamma^+ \, \}$. That is, for all elements \ $g,h\in \Gamma^+$,
 we set \ $e_g(h)=\delta_{g,h}$, where $\delta_{g,h}=1$ if $g=h$,
 and $\delta_{g,h}=0$ if $g\neq h$.

 Let us consider the $C^*$-algebra  of all bounded linear operators $B(l^2(\Gamma^+))$ in the Hilbert space $l^2(\Gamma^+)$.
For every element $g\in \Gamma^+$, we define the isometry $V_g\in
B(l^2(\Gamma^+))$ by
$$
V_ge_h=e_{g+h},
$$
 where $h\in \Gamma^+.$

  We denote by $C^*_r(\Gamma^+)$ the $C^*$-subalgebra in the algebra $B(l^2(\Gamma^+))$ generated
 by the set $\{V_g|g\in \Gamma^+\}$. It is called
  \textit{the reduced semigroup $C^*$-algebra of the semigroup $\Gamma^+$}, or
 \textit{the~Toeplitz algebra generated by $\Gamma^+$}.

In the case when \ $\Gamma$ \ is the group of all integers \
$\mathbb{Z}$, we also denote the semigroup $C^*$-algebra \
$C^*_r(\mathbb{Z}^+)$ \ by $\mathcal{T}$ and use the symbols $T$ and
$T^n$ instead of $V_1$ and $V_n$, respectively, where $n\in
\mathbb{Z^+}$.

In the similar way a semigroup $C^*$-algebra can be defined for an
arbitrary cancellative semigroup. As is noted in
\cite[Section~2]{Liarxiv}, a semigroup $C^*$-algebra is a  very
natural object. It is generated by the left regular representation
of a given semigroup.

Let $P=(p_1,p_2,p_3,...)$ be an arbitrary sequence of prime numbers. In what follows we shall consider the reduced semigroup  $C^*$-algebra  $C^*_r(Q_P^+)$ for the semigroup
$$Q_P^+=\{\frac{m}{p_1\cdot p_2\cdot...\cdot p_n}\mid m\in \mathbb{Z}^+,n\in
\mathbb{N}\}$$
of rational numbers.

It follows from Coburn's theorem \cite[Theorem 3.5.18]{murphy} that for
every number \ $n\in \mathbb{N}$, there exists a unique isometric \
$*$-homo\-morphism of \ $C^*$-algebras \
$\varphi:\mathcal{T}\longrightarrow \mathcal{T}$ \ such that \ $
\varphi(T)=T^n$. We note that a straightforward proof of the existence of the homo\-morphism $\varphi$ is given in \cite[Proposition~3]{gumerovConf}.

Consequently, for every sequence of prime numbers
$P=(p_1,p_2,p_3,...)$ one can construct the inductive sequence of
Toeplitz algebras $(\{\mathcal{T}_n\},\{\varphi_{n,n+1}\}),$ where
$\mathcal{T}_n=\mathcal{T}$, and the bonding $*$-homomorphisms are
defined as follows:
$$\varphi_{n,n+1}:\mathcal{T}_n\longrightarrow\mathcal{T}_{n+1}:T\mapsto
T^{p_n}, \ n\in \mathbb{N}.$$ Let us denote by $\mathfrak{T}$ the inductive limit of this sequence.  The following statement is proved in \cite{gumerov2018}.

\begin{lemma}{\rm \cite[Proposition 1]{gumerov2018}}\label{gumerov}
There exists an isomorphism of $C^*$-algebras:
$$\mathfrak{T}\simeq C^*_r(Q_p^+).
$$
\end{lemma}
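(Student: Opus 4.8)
The plan is to realize the isomorphism as the canonical $*$-homomorphism supplied by the universal property of the inductive limit, and then verify that it is both injective and surjective using Lemmas \ref{UniversalProp} and \ref{injective}. Throughout I would set $d_0=1$ and $d_n=p_1 p_2\cdots p_n$ for $n\in\mathbb{N}$, so that $d_n=d_{n-1}p_n$ and $Q_P^+=\bigcup_{n}\frac{1}{d_n}\mathbb{Z}^+$. For each $n\in\mathbb{N}$ the number $1/d_{n-1}$ lies in $Q_P^+$, and the corresponding operator $V_{1/d_{n-1}}\in C^*_r(Q_P^+)$ is an isometry whose range does not contain $e_0$ (there is no $h\in Q_P^+$ with $\tfrac{1}{d_{n-1}}+h=0$); hence $V_{1/d_{n-1}}$ is a non-unitary isometry.

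First I would use Coburn's theorem to define, for each $n\in\mathbb{N}$, the $*$-homomorphism $\psi_n:\mathcal{T}_n\longrightarrow C^*_r(Q_P^+)$ determined by $\psi_n(T)=V_{1/d_{n-1}}$. Because $V_{1/d_{n-1}}$ is a non-unitary isometry, the very theorem that yields the bonding maps $\varphi_{n,n+1}$ guarantees that each $\psi_n$ is isometric, in particular injective. The crucial point is the compatibility with the bonding maps: using $d_n=d_{n-1}p_n$ one has $\frac{p_n}{d_n}=\frac{1}{d_{n-1}}$, whence $\psi_{n+1}(\varphi_{n,n+1}(T))=\psi_{n+1}(T^{p_n})=V_{1/d_n}^{p_n}=V_{p_n/d_n}=V_{1/d_{n-1}}=\psi_n(T)$. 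Since $\psi_n$ and $\psi_{n+1}\circ\varphi_{n,n+1}$ are $*$-homomorphisms agreeing on the generator $T$, they coincide on all of $\mathcal{T}_n$, so $\psi_n=\psi_{n+1}\circ\varphi_{n,n+1}$ for every $n$.

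With this compatibility established, Lemma \ref{UniversalProp} applies and produces a unique $*$-homomorphism $\theta:\mathfrak{T}\longrightarrow C^*_r(Q_P^+)$ with $\theta\circ\sigma_n=\psi_n$ for all $n$, where $\{\sigma_n\}$ are the canonical $*$-homomorphisms into the limit $\mathfrak{T}$. Injectivity of $\theta$ is then immediate from Lemma \ref{injective}, because each $\psi_n$ is injective. For surjectivity I would note that the image of $\theta$ is a closed $*$-subalgebra of $C^*_r(Q_P^+)$ and that it contains every generator: an arbitrary $g\in Q_P^+$ has the form $g=m/d_n$, so $V_g=V_{1/d_n}^{m}=\psi_{n+1}(T^m)\in\operatorname{im}\theta$. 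As the family $\{V_g\mid g\in Q_P^+\}$ generates $C^*_r(Q_P^+)$, the map $\theta$ is onto, and therefore a $*$-isomorphism $\mathfrak{T}\simeq C^*_r(Q_P^+)$.

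I expect the main obstacle to be bookkeeping rather than any genuine difficulty. One must choose the indexing so that the $n$-th Toeplitz algebra is matched with the denominator $d_{n-1}$, and one must verify the key identity $\frac{p_n}{d_n}=\frac{1}{d_{n-1}}$ that makes $\{\psi_n\}$ genuinely compatible with the bonding maps $T\mapsto T^{p_n}$; an off-by-one choice here would break the commuting triangles. Once that identity is in place, existence and uniqueness of $\theta$ follow from Lemma \ref{UniversalProp}, injectivity from Lemma \ref{injective}, and surjectivity from the elementary remark that each semigroup generator $V_g$ already lies in the range of some $\psi_n$.
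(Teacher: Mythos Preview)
The paper does not supply its own proof of this lemma; it is quoted verbatim from \cite[Proposition~1]{gumerov2018} and used as a black box. Your argument is correct and is the natural one: exhibit compatible injective $*$-homomorphisms $\psi_n:\mathcal{T}_n\to C^*_r(Q_P^+)$ via Coburn's theorem (sending $T$ to the non-unitary isometry $V_{1/d_{n-1}}$), obtain $\theta$ from the universal property (Lemma~\ref{UniversalProp}), get injectivity from Lemma~\ref{injective}, and surjectivity from the observation that every generator $V_g$ is already hit by some $\psi_n$. There is nothing to compare against in the present paper, but this is precisely the line of proof one would expect in \cite{gumerov2018}; your bookkeeping with $d_n=p_1\cdots p_n$ and the identity $p_n/d_n=1/d_{n-1}$ is handled correctly.
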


For additional results in the theory of $C^*$-algebras we refer the
reader, for example, to \cite{blackadar}; \cite[Ch.\,4,
\S\,7]{helemskii} and \cite{murphy}. Necessary facts from the theory
of categories and functors are contained, for example, in \cite[
Ch.\,0, \S\,2]{helemskii} and \cite{BucurDeleanu1968}.

\section{Topology on an index set}\label{top}

Throughout the next sections we shall consider an arbitrary partially ordered
set \ $(\,K, \, \leq \,)$ that is not necessarily directed.
Taking the family of all upward directed subsets of the set \
$(\,K, \, \leq \,)$ and  using \,  Zorn's lemma, one can
easily prove the following statement.

\begin{lemma}\label{Kdecomposition}
Let \ $(\,K, \, \leq \,)$ \ be a partially ordered set. Then the
following equality holds:
\begin{equation}\label{KbigcupKi}
K=\bigcup\limits_{i\in I}K_i,
\end{equation}
where \ $\left \{ \, K_i \, | \, i\in I \, \right \}$ \ is the
family of all maximal upward directed subsets of  \, $(K, \, \leq
)$.

\end{lemma}

We consider the topology on the index set $I$ which was introduced in \cite{GLG2018,GLG2018-2}.  For the convenience of the reader we recall the  definition of this topology and its properties.

 For every element $a\in K$ we define the set
$U_a=\{i\in I \ : \ a\in K_i\} $. The family of sets $\{U_a \mid
a\in K\}$ satisfies the following properties:
\begin{itemize}
    \item[$-$] If $a,b\in K$ such that $a\leq b$ then $U_b\subset U_a$.
    \item[$-$] The family $\{U_a \mid a\in K\}$ is a base for a topology on the set $I$.
\end{itemize}

We denote by $\tau$ the topology generated by the base $\{U_a \mid
a\in K\}$. The  topological space $(I,\tau)$ is a $T_1$-space.

Examples of different topological spaces $(I,\tau)$ are contained in \cite{GLG2018,GLG2018-2}. In particular, $(I,\tau)$ may be: a non-Hausdorff space \cite[Example~1]{GLG2018-2}, a locally compact space \cite[Example~2]{GLG2018-2}, a discrete space \cite[Example~3]{GLG2018-2}. Here we give an example of a compact space.

\textbf{Example.} As the set $K$ we consider the set of all closed
arcs in the unit circle $S^1$:
\begin{multline*}
K:=\left\{A\subset
S^1 \ \big | \ A=[e^{2\pi ix},e^{2\pi iy}] \
\mbox{\rm or } \ A=S^1\setminus(e^{2\pi ix},e^{2\pi iy}), \right. \\
\left. \mbox{\rm where } \ x,y\in [0,1) \ \mbox{\rm and } \ x<y
\right\}.
\end{multline*}
A partial order on $K$ is defined in the following way: for $A,B\in
K$ we put $A\leq B \Leftrightarrow A\subset B$.

It is easily verified that the pair $(K,\leq)$ is a partially
ordered set. Moreover, it is worth noting that this set is not
directed. Indeed, take any $x_1,x_2,x_3,x_4\in[0,1)$ such that
$x_1<x_2<x_3<x_4$. Then for $A=[e^{2\pi ix_1},e^{2\pi ix_4}]$ and
$B=S^1\setminus(e^{2\pi ix_2},e^{2\pi ix_3})$ there is not $C\in K$
such that $A\leq C$ and $B\leq C$.

One has the representation of $K$ as the union of maximal upward
directed sets $K_z$ indexed by the points of the unit circle $S^1$,
that is, $K=\bigcup\limits_{z\in S^1}K_z$, where $z=e^{2\pi ix}, \
x\in [0,1),$ and
$$
K_z:=\left\{A\in K \ \big | \ A\subset S^1\setminus\{z\}\right\}.
$$

A base $\{U_A \ | \ A\in K\}$ for the topology $\tau$ on the index
set $I=S^1$ consists of the sets
$$
U_A=\left\{ z\in S^1 \ \big | \ A\in K_z\right\}=\left\{ z \ \big |
\ z\in S^1\setminus A\right\},
$$
that is, $U_A=S^1\setminus A$. Thus, the elements of the base for
the topology $\tau$ are all open arcs of the unit circle $S^1$:
\begin{multline*}
\left\{B\subset S^1 \ \big | \ B=(e^{2\pi ix},e^{2\pi iy}) \ \
\mbox{\rm or } \ A=S^1\setminus [e^{2\pi ix},e^{2\pi iy}], \right.
\\ \left.
\mbox{\rm where } \ x,y\in [0,1) \ \ \mbox{\rm and } \ x<y\right\}.
\end{multline*}
The topology $\tau$ coincides with the natural
topology on the unit circle $S^1$ that is compact.

\section{Main results}\label{res}

Let $K$ be an arbitrary partially ordered set. By Lemma~\ref{Kdecomposition}
we have representation (\ref{Kdecomposition}) of the set $K$ as the union of all maximal upward
directed subsets $K_i, \ i\in I$.

For each index $i\in I$ we consider an inductive system
$\mathcal{F}_i=(K_i,\{\mathcal{T}_a\},\{\textrm{id}_{ba}\})$ consisting of Toeplitz algebras, that is,
$\mathcal{T}_a=\mathcal{T}$ for all $ a\in K$,
% Here, $\{\mathcal{T}_a=\mathcal{T}\mid a\in K\}$ is a family of Toeplitz algebras,
 and the bonding $*$-homomorphisms $\textrm{i}_{ba}:\mathcal{T}_a\longrightarrow \mathcal{T}_b$,
 where $a\leq b$,  are the identity mappings.

Let us construct the inductive limits of the above-mentioned inductive systems:
$$(\mathcal{T}^{K_i},\{\textrm{id}^{K_i}_a\}):=\varinjlim\mathcal{F}_i=\varinjlim(K_i,\{\mathcal{T}_a\},\{\textrm{id}_{ba}\}).$$
It is clear that one has the isomorphism $\mathcal{T}^{K_i}\simeq\mathcal{T}$ of $C^*$-algebras.

Further, we take any element $a\in K$ and consider the direct
product of $C^*$-algebras
\begin{multline*}
\mathfrak{B}_a:=\prod\limits_{i\in U_a}\mathcal{T}^{K_i}=
\left\{f:U_a\longrightarrow  \bigcup_{i\in U_a}\mathcal{T}^{K_i}:i
\mapsto f(i) \in  \mathcal{T}^{K_i} \, \Big | \right. \\
\left. \|f\|=\sup_i\|f(i)\|< +\infty \right\}.
\end{multline*}

For every pair of elements $a,b\in K$ such that the inclusion $U_b\subset U_a$ holds, we define
the $*$-homomorphism
$\tau_{ba}:\mathfrak{B}_a\longrightarrow\mathfrak{B}_b$ by the~rule:
$$
\tau_{ba}(f)(j)=f(j),
$$
where $f\in \mathfrak{B}_a$ and $j\in U_b$. Obviously, one has the
equality $\tau_{ca}=\tau_{cb}\circ\tau_{ba}$ whenever $a,b,c\in K$
and  the condition \ $U_c\subset U_b\subset U_a$ \ holds.

Therefore, for each index $i\in I$ we can consider the inductive
system $(K_i,\{\mathfrak{B}_a\},\{\tau_{ba}\})$. Here,
the bonding $*$-homomorphisms $\tau_{ba}:\mathfrak{B}_a\longrightarrow\mathfrak{B}_b$ are defined
for all pairs of elements $a,b\in K$ satisfying the condition $U_b\subset U_a$, in particular, whenever  $a\leq b$. The  inductive limit of this system is denoted by
$$(\mathfrak{B}^{K_i}, \{\tau^{K_i}_a\} ):=\varinjlim(K_i,\{\mathfrak{B}_a\},\{\tau_{ba}\}).$$

We note that the analog of equality~
(\ref{sigma}) \, is valid, that is,  $\tau^{K_i}_a=\tau^{K_i}_b\circ\tau_{ba}$ whenever $a\leq b$.

\begin{lemma}\label{lim}
Let $i\in I$ be a non-isolated point with a countable neighbourhood base
 $\{U_{a_n}\mid a_n\in K_i, n\in \mathbb{N}\}$ satisfying the condition
\  $U_{a_1}\supset U_{a_2}\supset
U_{a_3}\supset...$. \ Let $$(\mathfrak{B},
\{\tau_{n}\}):=\varinjlim(\{\mathfrak{B}_{a_n}\},\{\tau_{a_{n+1}a_n}\})$$
be the inductive limit of the inductive sequence
$$
\xymatrix{ \mathfrak{B}_{a_1} \ar[r]^{\tau_{a_2a_1}} &
\mathfrak{B}_{a_2} \ar[r]^{\tau_{a_3a_2}} & \mathfrak{B}_{a_3}
\ar[r]^{\tau_{a_4a_3}} & ...},
$$
where $ \tau_{a_{n+1}a_n}(f)(j)=f(j)$ for $f\in \mathfrak{B}_{a_n}$
and $j\in U_{a_{n+1}}$. Then there exists an isomorphism of $C^*$-algebras
\begin{equation}\label{isom}
\mathfrak{B}\simeq\mathfrak{B}^{K_i}.
\end{equation}
\end{lemma}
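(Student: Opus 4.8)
The plan is to use the universal property of the inductive limit (Lemma~\ref{UniversalProp}) in both directions to produce mutually inverse $*$-homomorphisms between $\mathfrak{B}$ and $\mathfrak{B}^{K_i}$. The key observation is that the countable family $\{U_{a_n}\mid n\in\mathbb{N}\}$ is cofinal in the directed set of all $U_a$ with $a\in K_i$, in the sense that for every $a\in K_i$ there is some $n$ with $U_{a_n}\subset U_a$; this is precisely what it means for $\{U_{a_n}\}$ to be a neighbourhood base at $i$ while $\{U_b\mid b\in K_i\}$ runs through a neighbourhood base as well. Equivalently, the subsequence indices $\{a_n\}$ form a cofinal subset of $(K_i,\leq)$ under the preorder $a\preceq b \Leftrightarrow U_b\subset U_a$, which is the preorder actually governing the bonding maps $\tau_{ba}$.

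First I would construct a map $\mathfrak{B}\to\mathfrak{B}^{K_i}$. The composites $\psi_n:=\tau^{K_i}_{a_n}:\mathfrak{B}_{a_n}\to\mathfrak{B}^{K_i}$ satisfy $\psi_n=\psi_{n+1}\circ\tau_{a_{n+1}a_n}$ because $\tau^{K_i}_{a_n}=\tau^{K_i}_{a_{n+1}}\circ\tau_{a_{n+1}a_n}$ (the analog of~(\ref{sigma}) noted before the lemma). Hence by Lemma~\ref{UniversalProp} there is a unique $*$-homomorphism $\theta:\mathfrak{B}\to\mathfrak{B}^{K_i}$ with $\theta\circ\tau_n=\tau^{K_i}_{a_n}$ for all $n$. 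Conversely, to build $\mathfrak{B}^{K_i}\to\mathfrak{B}$ I would define, for each $a\in K_i$, a map $\chi_a:\mathfrak{B}_a\to\mathfrak{B}$ as follows: pick $n$ with $U_{a_n}\subset U_a$ and set $\chi_a:=\tau_n\circ\tau_{a_n a}$, where $\tau_{a_n a}(f)(j)=f(j)$ for $j\in U_{a_n}$. I must check this is independent of the choice of $n$ (use that if $U_{a_m}\subset U_{a_n}\subset U_a$ then $\tau_{a_m a}=\tau_{a_m a_n}\circ\tau_{a_n a}$ and $\tau_m=\tau_n\circ\tau_{a_m a_n}$), and that $\chi_a=\chi_b\circ\tau_{ba}$ whenever $U_b\subset U_a$. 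Then Lemma~\ref{UniversalProp} yields a unique $*$-homomorphism $\eta:\mathfrak{B}^{K_i}\to\mathfrak{B}$ with $\eta\circ\tau^{K_i}_a=\chi_a$ for all $a\in K_i$.

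Finally I would verify $\eta\circ\theta=\mathrm{id}$ and $\theta\circ\eta=\mathrm{id}$. For $\eta\circ\theta$, it suffices (by~(\ref{cup})) to check equality on each $\tau_n(\mathfrak{B}_{a_n})$: we have $\eta\circ\theta\circ\tau_n=\eta\circ\tau^{K_i}_{a_n}=\chi_{a_n}=\tau_n$ (taking the admissible choice $n$ itself in the definition of $\chi_{a_n}$, since $U_{a_n}\subset U_{a_n}$ and $\tau_{a_n a_n}=\mathrm{id}$). For $\theta\circ\eta$, check on each $\tau^{K_i}_a(\mathfrak{B}_a)$: choosing $n$ with $U_{a_n}\subset U_a$, $\theta\circ\eta\circ\tau^{K_i}_a=\theta\circ\chi_a=\theta\circ\tau_n\circ\tau_{a_n a}=\tau^{K_i}_{a_n}\circ\tau_{a_n a}=\tau^{K_i}_a$, the last step again by the analog of~(\ref{sigma}). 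Since both composites are identity on dense $*$-subalgebras and are continuous $*$-homomorphisms, they are the identity, so $\theta$ is the desired isomorphism. The main obstacle is the well-definedness of $\chi_a$ and the compatibility relation $\chi_a=\chi_b\circ\tau_{ba}$: these rest entirely on the cofinality of $\{a_n\}$ and on the transitivity identities among the $\tau$'s, so the crux is really to articulate cleanly why the countable neighbourhood base is cofinal among all $U_b$, $b\in K_i$, and to handle the bookkeeping of choices consistently.
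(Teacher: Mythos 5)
Your proposal is correct and takes essentially the same route as the paper: both directions of the isomorphism are obtained from the universal property, using that the nested base $\{U_{a_n}\}$ is cofinal among the $U_a$, $a\in K_i$, and the two composites are checked to be the identity. (You are in fact more explicit than the paper about well-definedness of $\chi_a$; just note the small index swap there — the correct identity is $\tau_n=\tau_m\circ\tau_{a_m a_n}$ for $U_{a_m}\subset U_{a_n}$, not $\tau_m=\tau_n\circ\tau_{a_m a_n}$.)
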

\begin{proof} Using the universal property for the inductive limits (see Lemma \ref{UniversalProp}), we have the unique $*$-homomorphism $\Psi:
\mathfrak{B}\longrightarrow\mathfrak{B}^{K_i}$ such that the following diagram is commutative:
\begin{equation}\label{diagram1}
\xymatrix{ \mathfrak{B}_{a_n} \ar[rr]^{\tau_{a_{n+1}a_n}}
\ar[dr]^{\tau_n} \ar[ddr]_{\tau^{K_i}_{a_n}} & &
\mathfrak{B}_{a_{n+1}} \ar[dl]_{\tau_{n+1}}
\ar[ddl]^{\tau^{K_i}_{a_{n+1}}} \\
&  \mathfrak{B} \ar[d]^{\Psi} & \\
&  \mathfrak{B}^{K_i}     }
\end{equation}

Take elements \ $a,b\in K_i$ \ for which the inclusion $U_b\subset U_a $ holds. Since the family
$\{U_{a_n}\mid a_n\in K_i, n\in \mathbb{N}\}$ is a neighbourhood base at the point
$i$ there exists a number $n\in\mathbb{N}$ such that we have the inclusions
$U_{a_n}\subset U_b\subset U_a $. The equality
$\tau_{a_na}=\tau_{a_nb}\circ\tau_{ba}$ implies the commutativity of the diagram
\[
\xymatrix{
  \mathfrak{B}_a \ar[rr]^{\tau_{ba}} \ar[dr]_{\tau_n\circ\tau_{a_na}}
                &  &    \mathfrak{B}_b \ar[dl]^{\tau_n\circ\tau_{a_nb}}    \\
                & \mathfrak{B}                 }
\]
Using again the universal property (Lemma
\ref{UniversalProp}), we get the unique
$*$-homomorphism $\Phi:
\mathfrak{B}^{K_i}\longrightarrow\mathfrak{B}$ making the following diagram commute:
\begin{equation}\label{diagram2}
\xymatrix{ \mathfrak{B}_{a} \ar[rr]^{\tau_{ba}}
\ar[dr]^{\tau^{K_i}_a} \ar[ddr]_{\tau_n\circ\tau_{a_na}} & &
\mathfrak{B}_{b} \ar[dl]_{\tau^{K_i}_{b}}
\ar[ddl]^{\tau_n\circ\tau_{a_{n}b}} \\
&  \mathfrak{B}^{K_i} \ar[d]^{\Phi} & \\
&  \mathfrak{B}     }
\end{equation}

By the universal property of the inductive limit, making use of diagrams
(\ref{diagram1}) and (\ref{diagram2}), we obtain the equalities
$\Psi\circ\Phi=\textrm{id}$ and $\Phi\circ\Psi=\textrm{id}$. This
means that we have isomorphism (\ref{isom}), as required.
%$\mathfrak{B}\simeq\mathfrak{B}^{K_i}.$
\end{proof}

\begin{theorem}\label{theorem} Let $i\in I$ be a non-isolated point with a countable neighbourhood base.
Then for every sequence of prime numbers $P=(p_1,p_2,p_3,...)$ there exists
an injective $*$-homomorphism of $C^*$-algebras:
$$C_r^*(Q_P^+)\longrightarrow \mathfrak{B}^{K_i}.$$
\end{theorem}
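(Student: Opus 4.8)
\emph{Proof idea.} The plan is to reduce the statement to the sequential inductive limit $\mathfrak{B}=\varinjlim(\{\mathfrak{B}_{a_n}\},\{\tau_{a_{n+1}a_n}\})$ of Lemma~\ref{lim} and to build an injective $*$-homomorphism $\theta\colon\mathfrak{T}\to\mathfrak{B}$ of the Toeplitz-type limit $\mathfrak{T}=\varinjlim(\{\mathcal{T}_n\},\{\varphi_{n,n+1}\})$ into $\mathfrak{B}$; composed with the isomorphisms $C_r^*(Q_P^+)\simeq\mathfrak{T}$ (Lemma~\ref{gumerov}, with the given sequence $P$) and $\mathfrak{B}\simeq\mathfrak{B}^{K_i}$ (Lemma~\ref{lim}), this gives the required embedding. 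First I would fix a countable neighbourhood base at $i$ consisting of basic open sets $\{U_{a_n}\mid a_n\in K_i\}$ and, by passing to a cofinal subfamily, arrange that $U_{a_1}\supset U_{a_2}\supset\cdots$, so that Lemma~\ref{lim} applies. Two structural facts about this family will be used throughout: since $(I,\tau)$ is a $T_1$-space and $\{U_{a_n}\}$ is a neighbourhood base at $i$, one has $\bigcap_{n}U_{a_n}=\{i\}$; and since $i$ is not isolated, each $U_{a_n}$ strictly contains $\{i\}$, i.e. $U_{a_n}\setminus\{i\}\neq\emptyset$. For $j\in U_{a_1}$ with $j\neq i$ set $\ell(j):=\max\{m: j\in U_{a_m}\}$, a well-defined positive integer by the first fact.

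Next I would construct compatible $*$-homomorphisms $\psi_n\colon\mathcal{T}_n\to\mathfrak{B}_{a_n}$ by ``twisting with Coburn powers along the shells'' $U_{a_m}\setminus U_{a_{m+1}}$. Each $\mathcal{T}^{K_j}$ is canonically isomorphic to $\mathcal{T}$ and is generated by an isometry $t_j$; by Coburn's theorem there is a unique, automatically isometric, $*$-homomorphism $\rho_j^{(n)}\colon\mathcal{T}\to\mathcal{T}^{K_j}$ with $\rho_j^{(n)}(T)=t_j^{\,p_np_{n+1}\cdots p_{\ell(j)-1}}$ (the empty product being $1$). Define $\psi_n$ coordinatewise by $\psi_n(x)(j)=\rho_j^{(n)}(x)$ for $j\in U_{a_n}$, $j\neq i$, and $\psi_n(x)(i)=0$; since each coordinate map is a contractive $*$-homomorphism, $\psi_n$ is a $*$-homomorphism into $\mathfrak{B}_{a_n}$. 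A coordinatewise check, using $\varphi_{n,n+1}(T)=T^{p_n}$ and the fact that $\tau_{a_{n+1}a_n}$ is restriction to $U_{a_{n+1}}$, yields $\tau_{a_{n+1}a_n}\circ\psi_n=\psi_{n+1}\circ\varphi_{n,n+1}$: on the coordinate $j=i$ both sides vanish, while on a coordinate $j\neq i$ in $U_{a_{n+1}}$ one has $\rho_j^{(n+1)}(\varphi_{n,n+1}(T))=\bigl(t_j^{\,p_{n+1}\cdots p_{\ell(j)-1}}\bigr)^{p_n}=t_j^{\,p_np_{n+1}\cdots p_{\ell(j)-1}}=\rho_j^{(n)}(T)$, and $*$-homomorphisms from $\mathcal{T}$ agreeing on $T$ coincide. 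Putting $\Psi_n:=\tau_n\circ\psi_n\colon\mathcal{T}_n\to\mathfrak{B}$ (with $\tau_n$ the canonical maps into $\mathfrak{B}$) and using $\tau_n=\tau_{n+1}\circ\tau_{a_{n+1}a_n}$, we get $\Psi_n=\Psi_{n+1}\circ\varphi_{n,n+1}$, so by the universal property (Lemma~\ref{UniversalProp}) there is a unique $*$-homomorphism $\theta\colon\mathfrak{T}\to\mathfrak{B}$ with $\theta\circ\nu_n=\Psi_n$, where $\nu_n\colon\mathcal{T}_n\to\mathfrak{T}$ are the canonical maps.

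The main point, and the step I expect to require care, is the injectivity of $\theta$: one must check that each $\Psi_n$ is injective so that Lemma~\ref{injective} applies, and this is not automatic since the canonical maps $\tau_n\colon\mathfrak{B}_{a_n}\to\mathfrak{B}$ are themselves \emph{not} injective (their kernels consist of functions supported off $U_{a_m}$ for large $m$). I would settle it by a norm computation: for $x\in\mathcal{T}_n$, the standard formula for the norm in a $C^*$-inductive limit gives $\|\Psi_n(x)\|_{\mathfrak{B}}=\lim_{m\geq n}\|\tau_{a_ma_n}(\psi_n(x))\|_{\mathfrak{B}_{a_m}}$, and for every $m\geq n$ one has $\|\tau_{a_ma_n}(\psi_n(x))\|_{\mathfrak{B}_{a_m}}=\sup_{j\in U_{a_m}}\|\psi_n(x)(j)\|=\|x\|$, because the coordinate $j=i$ contributes $0$ whereas any coordinate $j\in U_{a_m}$ with $j\neq i$ — which exists since $i$ is not isolated — contributes $\|\rho_j^{(n)}(x)\|=\|x\|$, the map $\rho_j^{(n)}$ being isometric. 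Hence $\|\Psi_n(x)\|_{\mathfrak{B}}=\|x\|$, so each $\Psi_n$ is isometric, in particular injective, and Lemma~\ref{injective} gives that $\theta$ is injective. Composing $\theta$ with the isomorphisms of Lemmas~\ref{gumerov} and~\ref{lim} produces the desired injective $*$-homomorphism $C_r^*(Q_P^+)\to\mathfrak{B}^{K_i}$. Note that the two structural facts $\bigcap_n U_{a_n}=\{i\}$ and $U_{a_n}\neq\{i\}$ are precisely what make the construction both well-defined (only the single coordinate $i$ carries an ``infinite Coburn power'' and is therefore sent to $0$) and norm-preserving (an isometric coordinate survives at every level $m$).
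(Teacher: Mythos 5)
Your proposal is correct and follows essentially the same route as the paper's proof: the same shell-indexed Coburn powers of the shift on $W_k=U_{a_k}\setminus U_{a_{k+1}}$, the same compatibility check on the generator $T$, the universal property together with Lemma~\ref{injective}, and the reductions via Lemmas~\ref{gumerov} and~\ref{lim}. The only substantive difference is that you explicitly assign the value $0$ at the coordinate $j=i$ (where the paper's formula for $L_{a_n}$ is silent, since $i$ lies in every $U_{a_m}$ and hence in no shell) and accordingly establish injectivity of $\Psi_n$ by a direct norm computation rather than by applying Coburn's theorem to the isometry $L_n\in\mathfrak{B}$; both points are handled correctly and, if anything, tighten the argument.
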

\begin{proof} We take a countable neighbourhood base
$\{U_{a_n}\mid a_n\in K_i, n\in \mathbb{N}\}$ at the point $i$ satisfying the conditions
$$U_{a_1}\supset U_{a_2}\supset U_{a_3}\supset...$$ and $a_1\leq a_2\leq a_3\leq
...$ . Consider the inductive sequence
$(\{\mathfrak{B}_{a_n}\},\{\tau_{a_{n+1}a_n}\})$, where $
\tau_{a_{n+1}a_n}(f)(j)=f(j)$ for $f\in \mathfrak{B}_{a_n}$,
$j\in U_{a_{n+1}}$, and its inductive limit
$$(\mathfrak{B},
\{\tau_{n}\}):=\varinjlim(\{\mathfrak{B}_{a_n}\},\{\tau_{a_{n+1}a_n}\}).$$

For every $n\in\mathbb{N}$ we define $W_n:=U_{a_n}\setminus
U_{a_{n+1}}$ and the operator-valued function $L_{a_n}$ in the algebra
$\mathfrak{B}_{a_n}$ as follows. For an index $j\in U_{a_n}$
we put
\begin{equation}\label{Lan}
L_{a_n}(j)=T^{p_np_{n+1}...p_k}, \textrm{
if } j\in W_k, \ k\in \mathbb{N}, \ k\geq n.
\end{equation}
Recall that here $T$ is the shift operator generating the Toeplitz
algebra $\mathcal{T}.$

Now we set
\begin{equation}\label{Ln}
L_n:=\tau_{n}(L_{a_n}).
\end{equation}
Let us show that  $L_n$ is an isometry in $C^*$-algebra
$\mathfrak{B}$. Indeed, since $\tau_{n}$ is a unital \,
$*$-homomorphism, one has the equalities
$$L_n^*L_n=\tau_{n}(L_{a_n}^*L_{a_n})=\tau_{n}(\mathbb{I}_{\mathfrak{B}_{a_n}})=
\mathbb{I}_{\mathfrak{B}}.$$

Further, we consider the inductive sequence of Toeplitz algebras
$(\{\mathcal{T}_n\},\{\varphi_{n,n+1}\}),$ where
$\mathcal{T}_n=\mathcal{T}$ and the bonding $*$-homomorphisms
$\varphi_{n,n+1}$ are defined by
\begin{equation}\label{phi}
\varphi_{n,n+1}:\mathcal{T}_n\longrightarrow\mathcal{T}_{n+1}:T\mapsto
T^{p_n}, \ n\in \mathbb{N}.
\end{equation}
By Lemma~\ref{gumerov}, the inductive limit of this sequence is isomorphic to the reduced semigroup $C^*$-algebra $C^*_r(Q^+_P)$. Hence, there exist injective
$*$-homomorphisms $\varphi_n:\mathcal{T}_n\longrightarrow
C^*_r(Q^+_P)$ such that the equality
$$\varphi_{n+1}\circ\varphi_{n,n+1}=\varphi_n$$
holds for every $n\in\mathbb{N}$.
This means that the diagram
\[
\xymatrix{
  \mathcal{T}_n \ar[rr]^{\varphi_{n,n+1}} \ar[dr]_{\varphi_n}
                &  &    \mathcal{T}_{n+1} \ar[dl]^{\varphi_{n+1}}    \\
                & C^*_r(Q^+_P)                 }
\]
is commutative.

It follows from Coburn's theorem \cite[Theorem 3.5.18]{murphy}
that for each number $n\in\mathbb{N}$ there is a unique isometric
 $*$-homomorphism
$\psi_n:\mathcal{T}_n\longrightarrow\mathfrak{B}$ such that the condition
\begin{equation}\label{psiL}\psi_n(T)=L_n.\end{equation}
holds.

We claim that the diagram
\[
\xymatrix{
  \mathcal{T}_n \ar[rr]^{\varphi_{n,n+1}} \ar[dr]_{\psi_n}
                &  &    \mathcal{T}_{n+1} \ar[dl]^{\psi_{n+1}}    \\
                & \mathfrak{B}                }
\]
is commutative, that is, the following equality for $*$-homomorphisms holds:
\begin{equation}\label{psi}
\psi_{n+1}\circ\varphi_{n,n+1}=\psi_n.
\end{equation}
Really, to prove equality (\ref{psi}) it is enough to show that for  the homomorphisms $\psi_{n+1}\circ\varphi_{n,n+1}$ and $\psi_n$  their values at the generating element
$T$ for the Toeplitz algebra $\mathcal{T}_n$ are the same. Making use of (\ref{phi}),
(\ref{psiL}) and (\ref{Ln}), we obtain the equalities
\begin{equation}\label{psileft}
(\psi_{n+1}\circ\varphi_{n,n+1})(T)=(L_{n+1})^{p_n}=\tau_{n+1}(L_{a_{n+1}}^{p_n}).
\end{equation}
On the other hand, using (\ref{psiL}), (\ref{Ln}) and the definition of the inductive limit, we get
\begin{equation}\label{psiright}
\psi_n(T)=\tau_{n}(L_{a_n})=(\tau_{n+1}\circ\tau_{a_{n+1}a_n})(L_{a_n}).
\end{equation}

Now we shall show that in the algebra $\mathfrak{B}_{a_{n+1}}$ the following equality holds:
\begin{equation}\label{tauL}
L_{a_{n+1}}^{p_n}=\tau_{a_{n+1}a_n}(L_{a_n}).
\end{equation}
Indeed, firstly, by (\ref{Lan}), we can write
$$L_{a_{n+1}}^{p_n}(j)=(T^{p_{n+1}...p_k})^{p_n}=T^{p_np_{n+1}...p_k},
\textrm{ if } j\in W_k, \ k\in \mathbb{N}, \ k\geq n+1.$$ Secondly,
we have the equalities
$$\tau_{a_{n+1}a_n}(L_{a_{n}})(j)=L_{a_{n}}(j), \textrm{ if } j\in
U_{a_{n+1}}$$ and
$$L_{a_{n}}(j)=T^{p_np_{n+1}...p_k}, \textrm{ if } j\in W_k, \
k\in \mathbb{N}, \ k\geq n.$$ This means that for every index $j\in
U_{a_{n+1}}$ we get the equality
$(L_{a_{n+1}}^{p_n})(j)=(\tau_{a_{n+1}a_n}(L_{a_n}))(j)$.
Consequently, equality (\ref{tauL}) is true. Thus, the expressions
on the right-hand sides in (\ref{psileft}) and (\ref{psiright}) are
equal. Therefore, equality (\ref{psi}) is valid, as claimed.

By the universal property for the inductive limits (see Lemma
\ref{UniversalProp}), there exists a unique $*$-homomorphism
$\theta: C^*_r(Q^+_P)\longrightarrow\mathfrak{B}$, such that the
following diagram is commutative:
$$
\xymatrix{ \mathcal{T}_{n} \ar[rr]^{\varphi_{n,n+1}}
\ar[dr]^{\varphi_n} \ar[ddr]_{\psi_{n}} & & \mathcal{T}_{n+1}
\ar[dl]_{\varphi_{n+1}}
\ar[ddl]^{\psi_{n+1}} \\
&  C^*_r(Q^+_P) \ar[d]^{\theta} & \\
&  \mathfrak{B}     }
$$
Since all the mappings
$\psi_n:\mathcal{T}_n\longrightarrow\mathfrak{B}$ are injective
$*$-homomorphisms, the  $*$-homomorphism  $\theta:
C^*_r(Q^+_P)\longrightarrow\mathfrak{B}$ is also injective (see
Lemma \ref{injective}).

Finally, to complete the proof of the theorem we use Lemma~\ref{lim} which states that the $C^*$-algebras
$\mathfrak{B}$ and $\mathfrak{B}^{K_i}$ are isomorphic.

\end{proof}

 Let us consider a sequence of prime numbers $P$ such that each prime number from $\mathbb{N}$ is equal to infinitely many terms of $P$, for example, $P=(2,2,3,2,3,5,2,3,5,7,\ldots)$.
It is straightforward to check that  the following equality holds for semigroups of rational numbers:
 $$
 \mathbb{Q}^+_P=\mathbb{Q}^+ :=\mathbb{Q}\cap[0,+\infty).$$

 As a consequence of Theorem~\ref{theorem}, we obtain

\begin{theorem}
Let $i\in I$ be a non-isolated point with a countable neighbourhood
base.  There exists an injective $*$-homomorphism of $C^*$-algebras:
$$C_r^*(Q^+)\longrightarrow \mathfrak{B}^{K_i}.$$
\end{theorem}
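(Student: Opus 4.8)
The plan is to deduce this directly from Theorem~\ref{theorem} by exhibiting one concrete sequence of primes for which the associated semigroup $Q_P^+$ exhausts all of $\mathbb{Q}^+$. First I would fix a sequence $P=(p_1,p_2,p_3,\ldots)$ in which every prime number occurs infinitely often, such as the sequence $P=(2,2,3,2,3,5,2,3,5,7,\ldots)$ indicated in the text. The only point requiring verification is the claimed equality of semigroups $Q_P^+=\mathbb{Q}^+$. The inclusion $Q_P^+\subseteq\mathbb{Q}^+$ is immediate from the definition of $Q_P^+$. For the reverse inclusion, I would take an arbitrary element $q\in\mathbb{Q}^+$, write it in lowest terms as $q=m/d$ with $m\in\mathbb{Z}^+$ and $d\in\mathbb{N}$, and factor $d=q_1^{k_1}\cdots q_r^{k_r}$ into primes. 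Since each prime $q_s$ appears infinitely many times among the terms of $P$, one can choose an index $n$ large enough that the initial segment $p_1,\ldots,p_n$ contains at least $k_s$ copies of $q_s$ for every $s=1,\ldots,r$; then $d$ divides $p_1\cdot p_2\cdots p_n$, say $p_1\cdots p_n=d\cdot t$, whence $q=m/d=(mt)/(p_1\cdots p_n)\in Q_P^+$. This establishes $\mathbb{Q}^+\subseteq Q_P^+$ and hence the equality.

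Having fixed this sequence $P$, I would simply invoke Theorem~\ref{theorem}: since $i\in I$ is a non-isolated point with a countable neighbourhood base, that theorem provides an injective $*$-homomorphism of $C^*$-algebras $C_r^*(Q_P^+)\longrightarrow\mathfrak{B}^{K_i}$. Substituting the identity $Q_P^+=\mathbb{Q}^+$ just established, this is precisely an injective $*$-homomorphism $C_r^*(\mathbb{Q}^+)\longrightarrow\mathfrak{B}^{K_i}$, which is the assertion of the theorem.

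There is no substantial obstacle here; the statement is a corollary, and the entire content of the argument is the elementary number-theoretic observation that a sequence of primes in which each prime recurs infinitely often produces a cofinal set of denominators $\{p_1\cdots p_n\mid n\in\mathbb{N}\}$ in the divisibility order on $\mathbb{N}$, so that the corresponding semigroup of rationals is all of $\mathbb{Q}^+$. The only mild care needed is to make sure the chosen index $n$ simultaneously accommodates all prime powers dividing the denominator $d$, which is possible precisely because there are only finitely many such primes while each is available infinitely often in $P$.
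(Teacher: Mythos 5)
Your proposal is correct and follows exactly the paper's own route: the paper obtains this theorem as a corollary of Theorem~\ref{theorem} by choosing a prime sequence $P$ in which every prime occurs infinitely often and noting that $Q_P^+=\mathbb{Q}^+$. You have merely written out the ``straightforward to check'' number-theoretic verification that the paper leaves implicit, and that verification is sound.
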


%
% The Bibliography
%

\


\begin{thebibliography}{99}



\bibitem{Ruzzi2005} G.~Ruzzi,  \textquotedblleft Homotopy of posets, net-cohomology and superselection
sectors in globally hyperbolic space-times,\textquotedblright \,
Rev. Math. Phys. \textbf{17} (9), 1021--1070 (2005).

\bibitem{RuzziVasselli2012} G.~Ruzzi and E.~Vasselli, \textquotedblleft A new light on nets of $C^*$-algebras
and their representations,\textquotedblright \,  Comm. Math. Phys.
\textbf{312} (3), 655--694 (2012).

\bibitem{Vasselli2015} E.~Vasselli, \textquotedblleft Presheaves of symmetric tensor categories and nets
of  \ $C^*$-algebras,\textquotedblright \, J. Noncommut. Geometry
\textbf{9} (1), 121--159 (2015).

\bibitem{GLS2016}
S.~A.~Grigoryan, T.~A.~Grigoryan, E.~V.~Lipacheva and
A.~S.~Sitdikov, \textquotedblleft \ $C^*$-algebra generated by the
path semigroup,\textquotedblright \, Lobachevskii J. Math.
\textbf{37} (6), 740--748 (2016). https://arxiv.org/abs/1608.02086

\bibitem{GLS2018}
S.~A.~Grigoryan,  E.~V.~Lipacheva and A.~S.~Sitdikov,
\textquotedblleft Nets of graded $C^*$-algebras over partially
ordered sets,\textquotedblright \, Algebra and Analysis \textbf{30}
(6), 1--19 (2018)  (in Russian) https://arxiv.org/abs/1905.03830 (in
English)

\bibitem{gumerov2018}
R.~N.~Gumerov, \textquotedblleft Limit Automorphisms of
$C^*$-algebras  Generated by Isometric Representations for
Semigroups of Rationals,\textquotedblright \, Sib. Math. J.
\textbf{59} (1), 73--84 (2018).

\bibitem{GLG2018}
R.~N.~Gumerov, E.~V.~Lipacheva and T.~A.~Grigoryan,
\textquotedblleft On Inductive Limits for Systems of
$C^*$~-~algebras,\textquotedblright \, Russian Math. (Izvestiya VUZ.
Matematika) \textbf{62} (7),  68--73 (2018).

\bibitem{GLG2018-2}
R.~N.~Gumerov, E.~V.~Lipacheva and T.~A.~Grigoryan,
\textquotedblleft On a topology and limits for inductive systems of
$C^*$-algebras over partially ordered sets,\textquotedblright \,
Int. J. Theor. Phys. (2019) (accepted) (preprint is available at
{https://arxiv.org/pdf/1811.01234.pdf} Accessed 2019.)

\bibitem{gumLJM2019}
R.~N.~Gumerov, \textquotedblleft \ Inductive Limits for  Systems of
Toeplitz Algebras,\textquotedblright \, Lobachevskii J. Math.
\textbf{40} (4) (2019) (appear).

\bibitem{Gu05}
R.~N.~Gumerov, \textquotedblleft On finite-sheeted covering mappings
onto solenoids,\textquotedblright \,
 Proc. Amer. Math. Soc. \textbf{133} (9), 2771--2778 (2005).

\bibitem{gumerovLJM2005}
 R.~N.~Gumerov,  \textquotedblleft On the existence of means on solenoids,\textquotedblright \,
Loba\-chevskii J. Math. \textbf{17}, 43--46 (2005).


\bibitem{Gu05s}
  R.~N.~Gumerov, \textquotedblleft Weierstrass Polynomials and Coverings of Compact Groups, \textquotedblright
 Sib. Math. J. \textbf{ 54} (2), 243Ц-246 (2013).

\bibitem{Co67}
L.~A.~ Coburn, \textquotedblleft The $C^*$-algebra generated by an
isometry, \textquotedblright \, Bull. Amer. Math. Soc. \textbf{73}
(5), 722--726 (1967).

\bibitem{coburn69}
L.~A.~ Coburn, \textquotedblleft The $C^*$-algebra generated by an
isometry.II,\textquotedblright \, Trans. Amer. Math. Soc.
\textbf{137}, 211--217 (1969).


\bibitem{douglas72}
R.~G.~Douglas, \textquotedblleft On the $C^*$-algebra of a
one-parameter semigroup of isometries,\textquotedblright \, Acta
Math.  \textbf{128}, 143--152 (1972).

\bibitem{murphy87}
G.~J.~Murphy,  \textquotedblleft Ordered groups and Toeplitz
algebras,\textquotedblright \, J. Oper. Theory  \textbf{18},
303--326 (1987).


\bibitem{murphy91}
 G.~J.~Murphy, \textquotedblleft Toeplitz operators and algebras, \textquotedblright Math. Z. \textbf{208},
 355--362 (1991).

\bibitem{murphy89}
G.~J.~Murphy, \textquotedblleft Simple $C^*$-algebras and subgroups
of $Q$,\textquotedblright \, Proc. Amer. Math. Soc. \textbf{107},
97--100 (1989).

\bibitem{Liarxiv}
X.~Li,\textquotedblleft Semigroup \
$C^*$-algebras,\textquotedblright \,
{https://arxiv.org/pdf/1707.05940.pdf.} Accessed 2019.

\bibitem{LipSibMJ}
E.~V.~Lipacheva and K.~H.~Hovsepyan, \textquotedblleft Automorphisms
of some subalgebras of the Toeplitz algebra,\textquotedblright \,
Sib. Math. J. \textbf{57} (3), 525--531 (2016).

\bibitem{LO15}
E.~V.~Lipacheva and K.~H.~Hovsepyan, \textquotedblleft The structure
of C*-subalgebras of the Toeplitz algebra fixed with respect to a
finite group of automorphisms,\textquotedblright \, Russian Math.
(Iz. VUZ) \textbf{59} (6), 10Ц-17 (2015).

\bibitem{AGL4}
M.~A.~Aukhadiev, S.~A.~Grigoryan, E.~V.~Lipacheva, \textquotedblleft
Operator approach to quantization of semigroups, \textquotedblright
\, Sb. Math. \textbf{205} (3), 319Ц342 (2014).

\bibitem{KadisonRingrose}
R.~V.~Kadison and  J.~R.~Ringrose, \emph{Fundamentals of the theory
of operator algebras, Volume II, Advanced theory} (Academic Press.
Inc., London, 1986).

\bibitem{IL}
N.~E.~Wegge-Olsen, \emph{$K$-Theory and $C^*$-algebras. A friendly
approach} (Oxford University Press, Oxford, New York, Tokyo, 1993).

\bibitem{murphy}
 G.~J.~Murphy, \emph{ $C^*$-algebras and operator theory} (Academic
Press, New York, 1990).

\bibitem{gumerovConf}
R.~N.~Gumerov, \textquotedblleft On Norms of Operators Generated by
Shift Transformations Arising in Signal and Image Processing on
Meshes Supplied with Semigroups Structures,\textquotedblright \,
2016, IOP Conf. Ser.: Mater. Sci. Eng. 158 012042
 { http://china.iopscience.iop.org/article/10.1088/1757-899X/158/1/012042/pdf.} Accessed 2019.

\bibitem{blackadar} B.~Blackadar  \emph{Operator algebras. Theory of
$C^*$-algebras and von Neumann algebras} (Encycl. Math. Sci. V. 122.
Springer Berlin 2006).


\bibitem{helemskii}
A.~Ya.~Helemskii,  \emph{Banach and locally convex algebras} (Oxford
Science Publications. The Clarendon Press Oxford University Press,
New York, 1993).

\bibitem{BucurDeleanu1968}
I.~Bucur and A.~Deleanu A., with the collaboration of Hilton P.J.
 \emph{Introduction to the Theory of Categories and Functors,
 Pure and Appl. Math., V. XIX,} (Wiley -- Interscience Publ.,
London -- New York -- Sydney 1968).


\end{thebibliography}
\end{document}